\definecolor{myurlcolor}{rgb}{0,0,0.5}
\newcommand{\hyph}{\mbox{-}}
\newcommand{\bref}[1]{(\ref{#1})}
\newcommand{\fcat}[1]{\mathbf{#1}}
\newcommand{\such}{:}
\DeclareMathOperator{\Hom}{Hom}
\newcommand{\id}{\mathrm{id}}
\newcommand{\Set}{\fcat{Set}}
\newcommand{\Mod}{\mathbf{Mod}}
\newcommand{\demph}[1]{\textbf{\textup{#1}}}
\newcommand{\iso}{\cong}
\newcommand{\sub}{\subseteq}
\renewcommand{\to}{\rightarrow}
\newcommand{\toby}[1]{\stackrel{#1}{\rightarrow}}
\newcommand{\epic}{\twoheadrightarrow}
\DeclareMathOperator{\im}{im}
\newcommand{\from}{\colon}
\newcommand{\utm}[3]{\bigl(
\begin{smallmatrix}
#1      &#2     \\
0       &#3
\end{smallmatrix}
\bigr)}
\newcommand{\utmd}[3]{%
\begin{pmatrix}
#1      &#2     \\
0       &#3
\end{pmatrix}}
\DeclareMathOperator{\End}{End}
\DeclareMathOperator{\rad}{rad}
\newcommand{\qiso}{\mathord{\cong}}
\newcommand{\of}{\mathbin{\circ}}
\newtheorem{thm}{Theorem}[section]
\newtheorem{propn}[thm]{Proposition}
\newtheorem{lemma}[thm]{Lemma}
\newtheorem{cor}[thm]{Corollary}
\newtheorem{example}[thm]{Example}
\theoremstyle{nonumberplain}
\newtheorem{proof}{Proof}
\newcommand{\theoremtobeproved}{}
\newtheorem{pfoftheorem}{Proof of \theoremtobeproved}
\author{Tom Leinster%
\thanks{School of Mathematics, University of Edinburgh.
Tom.Leinster@ed.ac.uk}}
\title{The bijection between\\ projective indecomposable and simple
  modules}
\date{\vspace{-5ex}}
\begin{document}

\sloppy
\maketitle

\begin{abstract}  
For modules over a finite-dimensional algebra, there is a canonical
one-to-one correspondence between the projective indecomposable modules and
the simple modules.  In this purely expository note, we take a
straight-line path from the definitions to this correspondence.  The proof
is self-contained.
\end{abstract}

\tableofcontents

\section{Introduction}

One of the remarkable features of the representation theory of
finite-dimensional algebras $A$ is the existence of a canonical bijection
\[
\{ \text{projective indecomposable $A$-modules} \}/\qiso
\quad \longleftrightarrow \quad
\{ \text{simple $A$-modules} \}/\qiso
\]
between the isomorphism classes of projective indecomposable modules and
the isomorphism classes of simple modules.  The bijection is given by
matching a projective indecomposable module $P$ with a simple module $S$
just when $S$ is a quotient of $P$.

The modest purpose of this expository note is to prove this correspondence,
starting from nothing.  Everything here is classical; nothing here is new.
For instance, almost all of what follows can be found in Chapter~1 of
Benson's book~\cite{Bens} or Chapter~I of Skowro\'nski and Yamagata's
book~\cite{SkYa}.

The exposition emphasizes the fact that the bijection can be established
without calling on any major theorems (or taking a detour to prove them).
This is certainly clear to many algebraists, but may not always be apparent
to the amateur.

In the final section, we do allow ourselves to use the Jordan--H\"older
theorem or the Krull--Schmidt theorem (either will do), but only to show
that the two sets of isomorphism classes related by the bijection are
finite.  The bijection itself is established without it.

Two features of the exposition are worth highlighting.  The first is the
indispensable role played by Fitting's lemma, and especially its corollary
that every endomorphism of an indecomposable module is either nilpotent or
invertible (Corollary~\ref{cor:indec-fitting}).  The second is the
observation that every projective indecomposable module is finitely
generated (a consequence of Lemma~\ref{lemma:pi-cyclic}).  Although this is
again well-known, it is perhaps not quite as well-known as it could be.

\section{Basic definitions}
\label{sec:basics}

Throughout, we fix a field $K$, not necessarily algebraically closed.  We
also fix a finite-dimensional $K$-algebra $A$.  Algebras are always taken
to be unital, but need not be commutative.

Terms such as vector space, linear, and dimension will always mean vector
space etc.\ over $K$.  Module will mean left $A$-module (not
necessarily finitely generated), and homomorphisms, endomorphisms and
quotients are understood to be of modules over $A$.  A double-headed arrow
$\epic$ denotes an epimorphism (surjective homomorphism).

Since $A$ is finite-dimensional, a module is finitely generated over $A$ if
and only if it is finite-dimensional over $K$.

A module is \demph{cyclic} if it is generated over $A$ by a single element,
or equivalently if it is a quotient of the $A$-module $A$.  It is
\demph{simple} if it is nonzero and has no nontrivial submodules.  It is
\demph{indecomposable} if it is nonzero and has no nontrivial direct
summands.

A module $P$ is \demph{projective} if the functor $\Hom_A(P, -): A\hyph\Mod
\to \Set$ preserves epimorphisms, in the categorical sense.  Concretely,
this means that given module homomorphisms $\phi$ and $\pi$ as shown, with
$\pi$ surjective, there exists a homomorphism $\psi$ making the triangle
commute:
\[
\xymatrix{
        &P \ar@{.>}[dl]_\psi \ar[dr]^\phi       &       \\
M \ar@{->>}[rr]_\pi &                           &N.
}
\]

In an intuitive sense, simple modules are `atomic', having no nontrivial
submodules.  Mere indecomposability is a weaker condition, but the
\emph{projective} indecomposable modules also have a claim to being the
`atoms' of $A$: for the $A$-module $A$ is isomorphic to a finite direct sum
of projective indecomposable modules, and every projective indecomposable
module appears as one of those summands.  (See Section~\ref{sec:ks}.)  These
two types of `atomic' module are genuinely different, since simple does not
imply projective indecomposable, nor vice versa (Example~\ref{eg:ut}).
Nonetheless, the canonical bijection that is the subject of this note
creates an intimate relationship between them.

We record some basic facts about projective modules.

\begin{lemma}
\label{lemma:proj-basic}
\begin{enumerate}
\item
\label{part:proj-summand}
Every direct summand of a projective module is projective.

\item
\label{part:proj-summand-free}
Every direct summand of a free module is projective.

\item
\label{part:proj-epi}
Let $M$ be a module and $P$ a projective module.  If $P$ is a quotient of
$M$ then $P$ is a direct summand of $M$.

\item
\label{part:proj-in-free}
Every projective module is a direct summand of a free module.
\end{enumerate}
\end{lemma}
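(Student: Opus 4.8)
The plan is to prove the four parts of Lemma~\ref{lemma:proj-basic} in the order \ref{part:proj-summand}, \ref{part:proj-summand-free}, \ref{part:proj-epi}, \ref{part:proj-in-free}, since each naturally feeds the next. For part~\ref{part:proj-summand}, suppose $P$ is projective and $P \iso Q \oplus Q'$; I want to show $Q$ is projective. Given a surjection $\pi\from M \epic N$ and a map $\phi\from Q \to N$, I would precompose $\phi$ with the projection $P \to Q$ to get a map $P \to N$, lift it through $\pi$ using projectivity of $P$, and then restrict the lift along the inclusion $Q \to P$. The composite of inclusion then projection on the $Q$-summand is the identity, so the restricted lift genuinely lifts $\phi$.

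For part~\ref{part:proj-summand-free}, the only thing needed is that a free module $A^{(I)}$ (a direct sum of copies of $A$, for any index set $I$) is projective; then part~\ref{part:proj-summand} finishes the job. Freeness gives projectivity by the universal property: a homomorphism out of $A^{(I)}$ is determined by where it sends the basis elements, and given $\pi\from M \epic N$ and $\phi\from A^{(I)} \to N$, I can choose for each basis element a preimage under $\pi$ (using surjectivity) and extend linearly to define the lift $\psi$. Part~\ref{part:proj-summand-free} then follows at once.

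For part~\ref{part:proj-epi}, suppose $\pi\from M \epic P$ with $P$ projective. Applying the lifting property of $P$ to the surjection $\pi$ itself and the identity map $\id\from P \to P$, I obtain a section $s\from P \to M$ with $\pi \of s = \id$. A split epimorphism exhibits $P$ as a direct summand of $M$: concretely, $M \iso \ker\pi \oplus \im s$. Finally, part~\ref{part:proj-in-free} comes from the standard fact that every module is a quotient of a free module (map a free module on any generating set onto it), combined with part~\ref{part:proj-epi}: if $P$ is projective and $F \epic P$ with $F$ free, then $P$ is a direct summand of $F$.

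I do not expect any part to present a genuine obstacle, as these are all routine diagram-chasing arguments. The step requiring the most care is the "splitting" claim in part~\ref{part:proj-epi} — verifying that a section of a surjection exhibits the target as a direct summand, i.e.\ that $M = \ker\pi \oplus s(P)$ — but this is a short standard computation. It is worth noting that parts~\ref{part:proj-summand-free} and~\ref{part:proj-in-free} are stated for arbitrary (possibly infinite-rank) free modules, so I would be careful to phrase the freeness argument for a general index set rather than only the finitely generated case.
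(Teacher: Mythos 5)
Your proposal is correct and follows essentially the same route as the paper: the same lift--and--restrict argument for part~(\ref{part:proj-summand}), the same basis-preimage construction to show free modules are projective for part~(\ref{part:proj-summand-free}), the same splitting of the identity to get $M = \ker\pi \oplus \im s$ for part~(\ref{part:proj-epi}), and the same deduction of part~(\ref{part:proj-in-free}) from the fact that every module is a quotient of a free module. No gaps; your attention to the infinite-rank case is a sensible precaution, though the argument is the same either way.
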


\begin{proof}
For~\bref{part:proj-summand}, let $P$ be a projective module decomposed as
a direct sum $P \iso X \oplus Y$.  We show that $X$ is projective.  Write
$\sigma\from P \epic X$ and $\iota\from X \to P$ for the projection and
inclusion of the direct sum.  Take homomorphisms $\pi\from M \epic N$ and
$\phi\from X \to N$.  Since $P$ is projective, there exists a homomorphism
$\psi$ such that
\[
\xymatrix{
P \ar[r]^\sigma \ar@{.>}[d]_\psi&X \ar[dr]^\phi &       \\
M \ar@{->>}[rr]_\pi             &               &N
}
\]
commutes.  We now have a homomorphism $\psi\iota \from X \to M$, and $\pi
\psi \iota = \phi \sigma \iota = \phi$.

To deduce~\bref{part:proj-summand-free} from~\bref{part:proj-summand}, it is
enough to show that free modules are projective.  Let $F$ be free with
basis $(e_s)_{s \in S}$.  Take $\pi$ and $\phi$ as shown:
\[
\xymatrix{
        &F \ar@{.>}[dl]_\psi \ar[dr]^\phi       &       \\
M \ar@{->>}[rr]_\pi &                           &N.
}
\]
We may choose for each $s \in S$ an element $m_s \in M$ such that $\pi(m_s)
= \phi(e_s)$.  There is a unique $\psi\from F \to M$ such that $\psi(e_s) =
m_s$ for all $s$, and then $\pi\psi = \phi$.

For~\bref{part:proj-epi}, given $\pi\from M \epic P$, we may choose a
homomorphism $\iota$ such that
\[
\xymatrix{
        &P \ar@{.>}[dl]_\iota \ar[dr]^{\id_P}   &       \\
M \ar@{->>}[rr]_\pi &                           &P
}
\]
commutes.  An easy calculation shows that $M = \ker\pi \oplus \im \iota$;
but $\im \iota \iso P$, so $P$ is a direct summand of $M$.

Part~\bref{part:proj-in-free} follows, since every module is a quotient of
some free module.
\end{proof}

\section{Fitting's lemma}

Here we recall a very useful basic result about the dynamics of a linear
operator on a finite-dimensional vector space.

\begin{lemma}[Fitting]
\label{lemma:fitting}
Let $\theta$ be a linear endomorphism of a finite-dimensional vector space
$X$.  Then $X = \ker(\theta^n) \oplus \im(\theta^n)$ for all $n \gg 0$.
\end{lemma}

\begin{proof}
The chain $\ker(\theta^0) \sub \ker(\theta^1) \sub \cdots$ of linear
subspaces of $X$ must eventually stabilize, say at $\ker(\theta^m)$.  Let
$n \geq m$.  If $x \in \ker(\theta^n) \cap \im(\theta^n)$ then $x =
\theta^n(y)$ for some $y \in X$; but then $0 = \theta^n(x) =
\theta^{2n}(y)$, so $y \in \ker(\theta^{2n}) = \ker(\theta^n)$, so $x = 0$.
Hence $\ker(\theta^n) \cap \im(\theta^n) = 0$.  Since $\dim\ker(\theta^n) +
\dim\im(\theta^n) = \dim X$, the result follows.
\end{proof}

\begin{cor}
\label{cor:indec-fitting}
Every endomorphism of a  finitely generated indecomposable module is
either nilpotent or invertible.
\end{cor}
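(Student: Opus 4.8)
The plan is to apply Fitting's lemma directly, the decisive extra ingredient being the observation that the decomposition it provides is one of \emph{modules}, not merely of vector spaces. Let $M$ be a finitely generated indecomposable module and let $\theta \from M \to M$ be an endomorphism. Since $A$ is finite-dimensional and $M$ is finitely generated over $A$, the module $M$ is finite-dimensional over $K$. Viewing $\theta$ as a $K$-linear endomorphism of $M$, Fitting's lemma gives $M = \ker(\theta^n) \oplus \im(\theta^n)$ for all large $n$; I would fix one such $n$.

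The next step, which is where the real work happens, is to upgrade this to a module decomposition. Because $\theta$ is a module homomorphism, so is each power $\theta^n$, and hence $\ker(\theta^n)$ and $\im(\theta^n)$ are \emph{submodules} of $M$. Thus the Fitting splitting is a direct-sum decomposition in $A\hyph\Mod$, which is precisely what lets indecomposability be brought to bear.

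Finally I would invoke indecomposability: one of the two summands must be zero. If $\im(\theta^n) = 0$ then $\theta^n = 0$, so $\theta$ is nilpotent. Otherwise $\ker(\theta^n) = 0$, so $\theta^n$ is an injective linear endomorphism of a finite-dimensional space and hence bijective; this forces $\theta$ itself to be bijective (injectivity of $\theta$ follows from that of $\theta^n$, and surjectivity of $\theta$ from that of $\theta^n = \theta \of \theta^{n-1}$). Since a bijective module homomorphism has a module-homomorphism inverse, $\theta$ is invertible.

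I do not anticipate a serious obstacle here. The only point requiring genuine care is the transition from the vector-space decomposition supplied by Fitting's lemma to an $A$-module decomposition, which rests entirely on the fact that the kernel and image of a module map are submodules; everything else is a short deduction from indecomposability together with the finiteness of $\dim_K M$.
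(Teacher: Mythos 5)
Your proof is correct and follows essentially the same route as the paper: apply Fitting's lemma to get $M = \ker(\theta^n) \oplus \im(\theta^n)$, observe this is a decomposition into submodules, and use indecomposability to force one summand to vanish. Your explicit remark that the kernel and image are submodules (so that indecomposability applies) is a point the paper leaves implicit, but the argument is the same.
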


\begin{proof}
Let $\theta$ be an endomorphism of a finitely generated indecomposable
module $M$.  By Lemma~\ref{lemma:fitting}, we can choose $n \geq 1$ such
that $\ker(\theta^n) \oplus \im(\theta^n) = M$.  Since $M$ is
indecomposable, $\ker(\theta^n)$ is either $0$ or $M$.  If $0$ then
$\theta^n$ is injective, so $\theta$ is injective; but $\theta$ is a linear
endomorphism of a finite-dimensional vector space, so $\theta$ is
invertible.  If $M$ then $\theta^n = 0$, so $\theta$ is nilpotent. 
\end{proof}

\section{Maximal submodules}
\label{sec:max}

We will need to know that every projective indecomposable $A$-module has a
maximal (proper) submodule.  A simple application of Zorn's lemma does not
prove this, since the union of a chain of proper submodules need not be
proper.  (And in fact, not every module over every ring does have a maximal
submodule.)

To prove it, we use two constructions.  Let $M$ be a module.  We write
$\rad(M)$ for the intersection of all the maximal submodules of $M$ (the
\demph{Jacobson radical}).  Given a left ideal $I$ of $A$, we write $IM$
for the submodule of $M$ generated by $\{im \such i \in I, m \in M\}$.
Both constructions are functorial:

\begin{lemma}
\label{lemma:basic-functorial}
Let $f\from M \to N$ be a homomorphism of modules.  Then $f\rad(M) \sub
\rad(N)$ and $f(I M) \sub I N$, for any left ideal $I$ of $A$.
\end{lemma}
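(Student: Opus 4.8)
The plan is to handle the two inclusions separately; the second is essentially immediate, and all the real work is in the first.

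For the ideal statement $f(IM) \sub IN$, I would argue directly from $A$-linearity. The submodule $IM$ is generated by the elements $im$ with $i \in I$ and $m \in M$, so it suffices to check that each such generator lands in $IN$. But $f(im) = i\,f(m)$, and since $f(m) \in N$ this lies in $IN$ by definition. As $IN$ is a submodule and $f$ a homomorphism, the image under $f$ of the submodule generated by the elements $im$ is contained in $IN$.

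For the radical statement $f\rad(M) \sub \rad(N)$, I would fix $x \in \rad(M)$ and show that $f(x)$ lies in every maximal submodule $L$ of $N$; this gives $f(x) \in \rad(N)$, since $\rad(N)$ is by definition the intersection of all maximal submodules of $N$. (If $N$ has no maximal submodules then $\rad(N) = N$ and there is nothing to prove.) The key device is to pull $L$ back along $f$. Consider the composite $M \toby{f} N \epic N/L$; its kernel is $f^{-1}(L)$, so the first isomorphism theorem furnishes an embedding of $M/f^{-1}(L)$ into $N/L$. Since $L$ is maximal, $N/L$ is simple, and hence has no submodules other than $0$ and itself.

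This forces a dichotomy for the image of $M/f^{-1}(L)$ in $N/L$, and I expect this case analysis to be the only point needing care. If the image is $0$, then $f(M) \sub L$, so trivially $f(x) \in L$. If the image is all of $N/L$, then $M/f^{-1}(L) \iso N/L$ is simple, so $f^{-1}(L)$ is a maximal submodule of $M$; since $x \in \rad(M)$ lies in every maximal submodule of $M$, in particular $x \in f^{-1}(L)$, i.e.\ $f(x) \in L$. In either case $f(x) \in L$, as required. The crux is thus the recognition that pulling a maximal submodule back along a homomorphism yields either the whole domain or again a maximal submodule, which is exactly what simplicity of $N/L$ delivers.
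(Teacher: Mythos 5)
Your proof is correct and follows essentially the same route as the paper: the second inclusion is immediate from $A$-linearity, and the first is obtained by composing $f$ with the projection onto the simple quotient $N/L$ and observing that the kernel of that composite is either $M$ or a maximal submodule, hence contains $\rad(M)$ in either case. The paper states this more tersely but the argument is identical.
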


\begin{proof}
The second statement is trivial.  For the first, let $K$ be a maximal
submodule of $N$; we must prove that $f\rad(M) \sub K$.  Since $N/K$ is
simple, the image of the composite $M \toby{f} N \epic N/K$ is either $N/K$
or $0$, so the kernel is either maximal or $M$.  In either case, the kernel
contains $\rad(M)$, so $f\rad(M) \sub K$.
\end{proof}

\begin{lemma}
\label{lemma:rad-rad}
Let $M$ be a module.  Then $\rad(A)M \sub \rad(M)$, with equality if $M$ is
projective. 
\end{lemma}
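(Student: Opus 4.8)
The plan is to prove the inclusion $\rad(A)M \sub \rad(M)$ for all $M$ first, and then upgrade it to an equality for projective $M$ by reducing to the free case via Lemma~\ref{lemma:proj-basic}.

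For the inclusion, the key observation is that $\rad(A)$ annihilates every simple module. Indeed, if $S$ is simple and $0 \neq t \in S$, then $a \mapsto at$ defines an epimorphism $A \epic S$ (its image is a nonzero submodule of $S$), so its kernel $\{a \such at = 0\}$ is a maximal left ideal; hence $\rad(A) \sub \{a \such at = 0\}$ and $\rad(A)t = 0$. Now let $K$ be any maximal submodule of $M$. The quotient $M/K$ is simple, so by the functoriality of $I(-)$ in Lemma~\ref{lemma:basic-functorial} the image of $\rad(A)M$ in $M/K$ is contained in $\rad(A)(M/K) = 0$; that is, $\rad(A)M \sub K$. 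Intersecting over all maximal $K$ gives $\rad(A)M \sub \rad(M)$. Applied to $M = A$, and combined with the obvious inclusion $\rad(A) = \rad(A)\cdot 1 \sub \rad(A)A$, this already yields $\rad(A)A = \rad(A)$.

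Next I would establish the equality for a free module $F = \bigoplus_{s} A$. Since $I(-)$ commutes with direct sums, the previous paragraph gives $\rad(A)F = \bigoplus_s \rad(A)A = \bigoplus_s \rad(A)$. The substantive step is the reverse inclusion $\rad(F) \sub \rad(A)F$, i.e.\ that every $x = (x_s) \in \rad(F)$ has all components $x_s \in \rad(A)$. Fixing an index $s_0$ and a maximal left ideal $\mathfrak{m}$, I compose the coordinate projection $F \epic A$ at $s_0$ with $A \epic A/\mathfrak{m}$ to obtain an epimorphism from $F$ onto the simple module $A/\mathfrak{m}$; its kernel is a maximal submodule of $F$, hence contains $\rad(F)$, forcing $x_{s_0} \in \mathfrak{m}$. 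Letting $\mathfrak{m}$ range over all maximal left ideals gives $x_{s_0} \in \rad(A)$, as required. Note that no finiteness of the index set is needed here.

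Finally I would transfer the equality from free to projective modules. By Lemma~\ref{lemma:proj-basic}\bref{part:proj-in-free}, a projective $P$ is a direct summand of a free module $F$; write $\iota\from P \to F$ and $\sigma\from F \to P$ for the inclusion and projection, so that $\sigma\iota = \id_P$. Functoriality (Lemma~\ref{lemma:basic-functorial}) gives $\iota\rad(P) \sub \rad(F) = \rad(A)F$ and $\sigma(\rad(A)F) \sub \rad(A)P$, whence $\rad(P) = \sigma\iota\rad(P) \sub \rad(A)P$. Together with the general inclusion, this proves $\rad(A)P = \rad(P)$. The main obstacle is the reverse inclusion in the free case: everything else is formal, but there one must actually produce enough maximal submodules of $F$ to detect membership of $\rad(A)$ coordinatewise.
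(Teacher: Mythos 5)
Your proof is correct and follows essentially the same route as the paper's: the general inclusion, then equality for free modules detected coordinatewise, then transfer to projective modules as direct summands of free ones. The only difference is cosmetic: where the paper simply applies the $\rad$-functoriality half of Lemma~\ref{lemma:basic-functorial} (to the maps $a \mapsto am$ and to the coordinate projections), you unwind that functoriality by hand via maximal submodules and the observation that $\rad(A)$ annihilates every simple module.
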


\begin{proof}
For each $m \in M$, right multiplication by $m$ defines a homomorphism $A
\to M$, so $\rad(A)m \sub \rad(M)$ by Lemma~\ref{lemma:basic-functorial}.
This proves the inclusion.

Next we prove that the inclusion is an equality for free modules.  Let $F$
be free with basis $(e_s)_{s \in S}$.  Let $x = \sum_{s \in S} x_s e_s \in
\rad(F)$ (with $x_s = 0$ for all but finitely many $s$).  Applying
Lemma~\ref{lemma:basic-functorial} to the $s$-projection $F \to A$ gives
$x_s \in \rad(A)$, for each $s \in S$.  Hence $x \in \rad(A)F$, giving
$\rad(F) \sub \rad(A)F$.

Now let $P$ be any projective module.  By
Lemma~\ref{lemma:proj-basic}\bref{part:proj-in-free}, there is an
epimorphism $\pi\from F \epic P$ with a section $\iota\from P \to F$, for
some free $F$.  So
\[
\rad(P) = \pi\iota\rad(P) \sub \pi\rad(F) = \pi(\rad(A)F) \sub \rad(A)P,
\]
using Lemma~\ref{lemma:basic-functorial} twice.
\end{proof}

\begin{lemma}
\label{lemma:rad-nil}
$\rad(A)^n = 0$ for some $n \geq 0$.
\end{lemma}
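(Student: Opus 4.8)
The plan is to exploit the finite dimensionality of $A$ to stabilize the chain of powers of $J := \rad(A)$, and then to finish with a Nakayama-style argument powered by Lemma~\ref{lemma:rad-rad}. First I would note that the descending chain of left ideals $A \supseteq J \supseteq J^2 \supseteq \cdots$ consists of linear subspaces of the finite-dimensional space $A$, so their dimensions cannot strictly decrease indefinitely. Hence the chain stabilizes: there is some $n \geq 0$ with $J^n = J^{n+1}$, that is,
\[
J^n = J \cdot J^n = \rad(A) \cdot J^n.
\]

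Next I would regard $J^n$ as a module in its own right --- it is a left ideal, hence a submodule of $A$, and it is finite-dimensional. Applying Lemma~\ref{lemma:rad-rad} to $M = J^n$ gives $\rad(A) \cdot J^n \sub \rad(J^n)$, so that $J^n \sub \rad(J^n)$. Since $\rad(J^n)$ is by definition a submodule of $J^n$, the reverse inclusion is automatic, and therefore $J^n = \rad(J^n)$.

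It remains to deduce that $J^n = 0$, and this is the step I expect to be the crux. The relevant fact is that a nonzero finite-dimensional module always has a maximal proper submodule: among its proper submodules (and $0$ is one, since the module is nonzero) choose one of largest dimension, and maximality is forced. Consequently its radical, being contained in that maximal submodule, is itself a proper submodule. So if $J^n$ were nonzero we would have $\rad(J^n) \subsetneq J^n$, contradicting $J^n = \rad(J^n)$. Hence $J^n = 0$. The only delicate point here is that this existence of a maximal submodule rests on finite dimensionality rather than on Zorn's lemma --- exactly the distinction flagged in the preamble to this section --- so that the stabilization $J^n = \rad(A) J^n$ genuinely upgrades to nilpotence.
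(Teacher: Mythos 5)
Your proof is correct and follows essentially the same route as the paper's: both use finite-dimensionality to stabilize the chain of powers of $\rad(A)$, invoke Lemma~\ref{lemma:rad-rad} to relate $\rad(A)\cdot\rad(A)^n$ to $\rad(\rad(A)^n)$, and conclude via the existence of a maximal submodule in any nonzero finite-dimensional module. The only difference is cosmetic: the paper phrases the stabilization as choosing $n$ minimizing $\dim\rad(A)^n$ and derives a contradiction with that minimality, whereas you derive $\rad(A)^n=\rad(\rad(A)^n)$ directly and then contradict nonzeroness.
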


\begin{proof}
Since $A$ is finite-dimensional, we can choose $n \geq 0$ minimizing the
dimension of the $A$-module $\rad(A)^n$.  Suppose that it is nonzero.  By
finite-dimensionality again, $\rad(A)^n$ has a maximal submodule, so
$\rad(\rad(A)^n)$ is a proper submodule of $\rad(A)^n$.  But $\rad(A)^{n +
  1} \sub \rad(\rad(A)^n)$ by Lemma~\ref{lemma:rad-rad}, so $\rad(A)^{n +
  1}$ is a proper submodule of $\rad(A)^n$, a contradiction.
\end{proof}

\begin{propn}
\label{propn:proj-max}
Every nonzero projective module has a maximal submodule.
\end{propn}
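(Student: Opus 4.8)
The plan is to argue by contradiction, turning the absence of a maximal submodule into a Nakayama-type equation that the nilpotence of $\rad(A)$ forbids. Suppose $P$ is a nonzero projective module with no maximal submodule. The first step is to interpret this in terms of the radical: since $\rad(P)$ is defined as the intersection of all maximal submodules of $P$, and there are none, this intersection is the empty intersection, namely all of $P$. So $\rad(P) = P$.

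Next I would bring projectivity into play. By Lemma~\ref{lemma:rad-rad}, projectivity of $P$ gives the equality $\rad(P) = \rad(A)P$, so the previous step yields $\rad(A)P = P$. The plan is then to iterate this identity. Using the elementary fact that $\rad(A)^{k+1}P = \rad(A)\bigl(\rad(A)^k P\bigr)$ (which follows from associativity of the product of an ideal against the module), a straightforward induction gives $\rad(A)^n P = P$ for every $n \geq 0$: at each stage $\rad(A)^{k+1}P = \rad(A)(\rad(A)^k P) = \rad(A)P = P$.

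Finally I would invoke Lemma~\ref{lemma:rad-nil}, which supplies an $n$ with $\rad(A)^n = 0$. For this $n$ we obtain $P = \rad(A)^n P = 0$, contradicting the assumption that $P$ is nonzero. Hence $P$ must have a maximal submodule.

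The substance of the argument is not any single hard computation but the recognition that ``no maximal submodule'' is exactly the statement $\rad(P) = P$, and that this is incompatible with the nilpotence of $\rad(A)$ once projectivity lets us rewrite $\rad(P)$ as $\rad(A)P$. The only routine verification is the iteration identity $\rad(A)^{k+1}P = \rad(A)(\rad(A)^k P)$; everything else is assembling the earlier lemmas in the right order.
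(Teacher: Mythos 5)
Your proof is correct and follows exactly the paper's own argument: no maximal submodule gives $\rad(P) = P$, projectivity upgrades this to $\rad(A)P = P$ via Lemma~\ref{lemma:rad-rad}, iteration gives $\rad(A)^n P = P$, and Lemma~\ref{lemma:rad-nil} forces $P = 0$. The only difference is that you spell out the empty-intersection convention and the induction step, which the paper leaves implicit.
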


\begin{proof}
Let $P$ be a projective module with no maximal submodule.  Then $P =
\rad(P) = \rad(A)P$ by Lemma~\ref{lemma:rad-rad}, so $P = \rad(A)^n P$ for
all $n \geq 0$, so $P = 0$ by Lemma~\ref{lemma:rad-nil}.
\end{proof}

\section{From simple modules to projective indecomposable modules}

Here we show that every simple module $S$ has associated with it a unique
projective indecomposable module $P$, characterized by the existence of an
epimorphism $P \epic S$.

Roughly, our first lemma says that different projective indecomposable
modules share no quotients.

\begin{lemma}
\label{lemma:no-share}
Let $P$ and $P'$ be projective indecomposable modules, at least one of
which is finitely generated.  If some nonzero module is a quotient of
both $P$ and $P'$ then $P \iso P'$.
\end{lemma}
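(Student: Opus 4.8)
The plan is to exploit projectivity to build maps between $P$ and $P'$ in both directions, and then to apply Corollary~\ref{cor:indec-fitting} to a self-map of the finitely generated module. Since the hypothesis and conclusion are both symmetric in $P$ and $P'$, I may assume that $P$ is the one that is finitely generated. Write $Q$ for the common nonzero quotient, with epimorphisms $\alpha \from P \epic Q$ and $\alpha' \from P' \epic Q$. Projectivity of $P'$, applied to the surjection $\alpha$ and the map $\alpha'$, yields $\beta' \from P' \to P$ with $\alpha \of \beta' = \alpha'$; symmetrically, projectivity of $P$ yields $\beta \from P \to P'$ with $\alpha' \of \beta = \alpha$.

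Next I would examine the endomorphism $\theta = \beta' \of \beta$ of $P$. Combining the two lifting identities gives $\alpha \of \theta = \alpha \of \beta' \of \beta = \alpha' \of \beta = \alpha$. As $P$ is finitely generated and indecomposable, Corollary~\ref{cor:indec-fitting} forces $\theta$ to be either nilpotent or invertible. The nilpotent case cannot occur: if $\theta^n = 0$ then $\alpha = \alpha \of \theta^n = 0$, contradicting the surjectivity of $\alpha$ onto the nonzero module $Q$. Hence $\theta = \beta' \of \beta$ is an automorphism of $P$.

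It remains to upgrade this to an isomorphism $P \iso P'$. Since $\theta$ is invertible, $\theta^{-1} \of \beta'$ is a left inverse of $\beta$, so $\beta$ is a split monomorphism and identifies $P$ with a direct summand of $P'$. This summand is nonzero, because $P$ itself is nonzero (it surjects onto $Q$, which is nonzero), so indecomposability of $P'$ makes it the whole of $P'$; thus $\beta$ is an isomorphism and $P \iso P'$. I expect the one genuinely inventive step to be the choice of $\theta = \beta' \of \beta$ as the map to feed into Fitting's corollary, together with the use of $\alpha \neq 0$ to rule out nilpotency; the final passage from a split monomorphism into an indecomposable module to an isomorphism is then routine.
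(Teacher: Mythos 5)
Your proposal is correct and follows essentially the same route as the paper: lift the two epimorphisms against each other using projectivity, feed the resulting endomorphism of the finitely generated module into Corollary~\ref{cor:indec-fitting}, rule out nilpotency via the nonzero quotient, and conclude that $P$ is a direct summand of the indecomposable module $P'$. The only cosmetic difference is at the end, where the paper observes that the map $P' \to P$ is an epimorphism and invokes Lemma~\ref{lemma:proj-basic}\bref{part:proj-epi}, whereas you note directly that $\beta$ is a split monomorphism; these amount to the same splitting.
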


\begin{proof}
Suppose that $P$ is finitely generated, and that there exist a nonzero
module $M$ and epimorphisms $\pi\from P \epic M$, $\pi'\from P' \epic M$.
Since $P$ and $P'$ are projective, there exist homomorphisms
\[
\xymatrix{
P \ar@{->>}[rd]_{\pi} \ar@<.5ex>[rr]^{\alpha'} &       &
P' \ar@{->>}[ld]^{\pi'} \ar@<.5ex>[ll]^{\alpha}       \\
                &M
}
\]
such that $\pi'\alpha' = \pi$ and $\pi\alpha = \pi'$.  Then $\alpha\alpha'$
is an endomorphism of $P$ satisfying $\pi(\alpha\alpha') = \pi$.  Since $P$
is indecomposable, Corollary~\ref{cor:indec-fitting} implies that
$\alpha\alpha'$ is nilpotent or invertible.  If nilpotent then
$(\alpha\alpha')^n = 0$ for some $n \geq 0$, so $\pi = \pi(\alpha\alpha')^n
= \pi 0 = 0$, contradicting the fact that $\pi$ is an epimorphism to a
nonzero module.  So $\alpha\alpha'$ is invertible, and in particular
$\alpha$ is an epimorphism.  By
Lemma~\ref{lemma:proj-basic}\bref{part:proj-epi}, $P$ is therefore a direct
summand of $P'$.  But $P'$ is indecomposable and $P$ is nonzero, so $P
\iso P'$.
\end{proof}

\begin{lemma}
\label{lemma:simple-cyclic}
Every simple module is cyclic.
\end{lemma}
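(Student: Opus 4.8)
The plan is to choose a single nonzero element of the module and show, using simplicity, that it already generates the whole module. First I would let $S$ be a simple module. By definition $S$ is nonzero, so I can pick some $s \in S$ with $s \neq 0$. I then consider the cyclic submodule $As = \{as \such a \in A\}$ of $S$ generated by $s$.

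The key observation is that because $A$ is unital, $s = 1 \cdot s \in As$, so $As$ is a nonzero submodule of $S$. Since $S$ is simple, its only submodules are $0$ and $S$ itself; as $As \neq 0$, this forces $As = S$. Hence $S$ is generated over $A$ by the single element $s$, which is exactly what it means for $S$ to be cyclic.

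There is essentially no serious obstacle here; the statement is an immediate unwinding of the definitions of \emph{simple} and \emph{cyclic}. The one point worth flagging is the appeal to unitality of $A$: it is precisely the presence of $1 \in A$ that guarantees $s \in As$ and thus that the submodule generated by $s$ is nonzero, ruling out the degenerate possibility $As = 0$ that would otherwise block the argument.
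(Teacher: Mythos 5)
Your proof is correct and follows the same route as the paper: pick a nonzero element, observe that the submodule it generates is nonzero, and invoke simplicity. The remark about unitality guaranteeing $s \in As$ is a fair point of care, though the paper sidesteps it by speaking of the submodule \emph{generated by} $x$, which contains $x$ by definition.
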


\begin{proof}
Let $S$ be a simple module.  Since $S$ is nonzero, we may choose a nonzero
element $x \in S$.  The submodule generated by $x$ is nonzero, and is
therefore $S$.
\end{proof}

\begin{lemma}
\label{lemma:simple-some-qt}
Every simple module is a quotient of some cyclic projective indecomposable
module.
\end{lemma}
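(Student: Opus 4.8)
The plan is to find the required module among the indecomposable direct summands of the regular module $A$, which is legitimate precisely because $A$ is finite-dimensional. I begin with the epimorphism $q\from A \epic S$ supplied by Lemma~\ref{lemma:simple-cyclic} (a simple module being cyclic, hence a quotient of $A$), and then hunt for a single summand of $A$ that already surjects onto $S$.

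First I would decompose the $A$-module $A$ as a finite direct sum $A = P_1 \oplus \cdots \oplus P_r$ of indecomposable submodules. This needs only an induction on $K$-dimension: a nonzero module is either indecomposable or splits into two proper summands of strictly smaller dimension, and the process must terminate.

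Next I would verify that each summand $P_i$ is of the required kind. Projectivity is immediate from Lemma~\ref{lemma:proj-basic}\bref{part:proj-summand-free}, since $P_i$ is a direct summand of the free module $A$; finite generation follows because $P_i$ is a summand of the finite-dimensional $A$; and indecomposability holds by construction. The one point that is not quite automatic is cyclicity, and this is where I expect to spend a line of genuine argument: writing $1 = e_1 + \cdots + e_r$ with each $e_i \in P_i$, one checks that the $e_i$ are orthogonal idempotents with $P_i = A e_i$, so that $P_i$ is generated by the single element $e_i$.

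Finally I would use the simplicity of $S$ to select the correct summand. Restricting $q$ along the inclusions $P_i \hookrightarrow A$ gives homomorphisms $P_i \to S$ whose images sum to $\im q = S$. Each image is a submodule of the simple module $S$, so is either $0$ or $S$; as they cannot all vanish, at least one restriction $P_i \to S$ is surjective. This $P_i$ is then a cyclic projective indecomposable module admitting an epimorphism onto $S$, which is exactly what the lemma demands.
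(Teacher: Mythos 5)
Your proof is correct, but it takes a genuinely different route from the paper's. The paper never decomposes $A$ fully into indecomposables at this stage: it chooses, among all direct summands $M$ of $A$ of which $S$ is a quotient, one of smallest dimension, and extracts indecomposability from that minimality --- if the chosen $P$ split as $M \oplus N$, simplicity of $S$ would force $S$ to be a quotient of $M$ or of $N$, and each of these is again a direct summand of $A$, contradicting minimality unless the splitting is trivial. Cyclicity then comes for free, since a direct summand of $A$ is a quotient of $A$. Your approach instead establishes the existence half of the Krull--Schmidt decomposition up front (by induction on dimension, which is legitimate because $A$ is finite-dimensional) and then selects the right summand using simplicity of $S$, which is exactly the argument the paper uses inside its indecomposability step. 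What the paper's version buys is economy: it avoids committing to a full decomposition of $A$, which it deliberately defers to Section~\ref{sec:ks}, and it gets cyclicity without the idempotent computation (your identity $P_i = Ae_i$ is correct, but it amounts to observing that the splitting projection $A \epic P_i$ sends $1$ to a generator $e_i$). What your version buys is a concrete picture of where the cyclic projective indecomposables live --- as the summands in a decomposition $A = P_1 \oplus \cdots \oplus P_r$ --- which anticipates Lemma~\ref{lemma:pi-on-list}.
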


\begin{proof}
Let $S$ be a simple module.  By Lemma~\ref{lemma:simple-cyclic}, $S$ is a
quotient of the $A$-module $A$.  Thus, among all direct summands $M$ of $A$
with the property that $S$ is a quotient of $M$, we may choose one of
smallest dimension; call it $P$.  Then $P$ is projective (by
Lemma~\ref{lemma:proj-basic}\bref{part:proj-summand-free}) and cyclic (being a
quotient of $A$).  To see that $P$ is indecomposable, suppose that $P = M
\oplus N$ for some submodules $M$ and $N$.  Take an epimorphism $\pi\from P
\epic S$.  Then $S = \pi M + \pi N$ and $S$ is nonzero, so without loss of
generality, $\pi M$ is nonzero.  Since $S$ is simple, $\pi M = S$, so $S$
is a quotient of $M$.  But $M$ is a direct summand of $A$, so minimality of
$P$ gives $M = P$, as required.
\end{proof}

The next result can be compared to Lemma~\ref{lemma:simple-cyclic}.  It
implies, in particular, that projective indecomposable modules are finitely
generated. 

\begin{lemma}
\label{lemma:pi-cyclic}
Every projective indecomposable module is cyclic.
\end{lemma}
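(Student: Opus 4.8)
The plan is to exhibit $P$ as isomorphic to one of the cyclic projective indecomposable modules already constructed in Lemma~\ref{lemma:simple-some-qt}. The idea is that $P$ and such a module will be shown to share a common simple quotient, after which Lemma~\ref{lemma:no-share} forces them to be isomorphic.

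First I would produce a simple quotient of $P$. Since $P$ is indecomposable it is nonzero, so by Proposition~\ref{propn:proj-max} it has a maximal submodule $L$. The quotient $S = P/L$ is then simple, and the canonical map $P \epic S$ exhibits $S$ as a nonzero quotient of $P$. Next I would apply Lemma~\ref{lemma:simple-some-qt} to $S$, obtaining a cyclic projective indecomposable module $Q$ together with an epimorphism $Q \epic S$. Being cyclic, $Q$ is a quotient of the finite-dimensional algebra $A$, hence finite-dimensional, hence finitely generated. Now $S$ is a nonzero module that is a quotient of both $P$ and $Q$, and $Q$ is finitely generated, so Lemma~\ref{lemma:no-share} applies and yields $P \iso Q$. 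Since $Q$ is cyclic, so is $P$.

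The real work has all been done in advance, so I expect no calculational difficulty here; the only genuine obstacle is guaranteeing that $P$ has a simple quotient at all. This is precisely where Proposition~\ref{propn:proj-max} is indispensable: a naive attempt to find a maximal submodule by Zorn's lemma fails, and the existence of one rests on the radical machinery of Section~\ref{sec:max}. Once a simple quotient $S$ is in hand, the matching cyclic projective indecomposable module supplied by Lemma~\ref{lemma:simple-some-qt}, together with the rigidity provided by Lemma~\ref{lemma:no-share}, forces $P$ into cyclic form with no further effort.
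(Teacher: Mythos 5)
Your proof is correct and follows the paper's own argument exactly: obtain a simple quotient $S$ of $P$ via Proposition~\ref{propn:proj-max}, take the cyclic projective indecomposable $Q$ with $Q \epic S$ from Lemma~\ref{lemma:simple-some-qt}, and conclude $P \iso Q$ by Lemma~\ref{lemma:no-share}. Your explicit check that $Q$ is finitely generated (so that the hypothesis of Lemma~\ref{lemma:no-share} is satisfied) is a small point the paper leaves implicit, and is worth making.
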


\begin{proof}
Let $P$ be a projective indecomposable module.  By
Proposition~\ref{propn:proj-max}, we may choose a maximal submodule of $P$,
the quotient by which is a simple module: say $\pi \from P \epic S$.  By
Lemma~\ref{lemma:simple-some-qt}, we may then choose $\pi' \from P' \epic
S$ with $P'$ cyclic and projective indecomposable.  By
Lemma~\ref{lemma:no-share}, $P \iso P'$.  Hence $P$ is cyclic.
\end{proof}

\begin{propn}
\label{propn:proj-cover}
For each simple module $S$, there is a projective indecomposable module,
unique up to isomorphism, of which $S$ is a quotient.
\end{propn}

\begin{proof}
Lemma~\ref{lemma:simple-some-qt} proves existence.
Lemmas~\ref{lemma:no-share} and~\ref{lemma:pi-cyclic} prove uniqueness up
to isomorphism.
\end{proof}

Given a simple module $S$, the unique projective indecomposable module of
which $S$ is a quotient is called the \demph{projective cover} of $S$.

\section{From projective indecomposable modules to simple modules}

In the last section, we showed that for every simple module $S$, there is a
unique projective indecomposable module $P$ for which there exists an
epimorphism $P \epic S$.  We now show that this process is bijective.  In
other words, we show that for every projective indecomposable module $P$,
there is a unique simple module $S$ for which there exists an epimorphism
$P \epic S$.

\begin{lemma}
\label{lemma:pi-maximal}
Every projective indecomposable module has exactly one maximal submodule.
\end{lemma}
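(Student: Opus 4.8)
The statement has two halves, existence and uniqueness of a maximal submodule, and existence is immediate: since $P$ is nonzero and projective, Proposition~\ref{propn:proj-max} already supplies a maximal submodule. So the plan is to concentrate entirely on uniqueness, arguing by contradiction. Suppose $P$ has two distinct maximal submodules $M_1 \neq M_2$, and write $S_i = P/M_i$ with quotient maps $\pi_i \from P \epic S_i$, so that each $S_i$ is simple. I would also record at the outset that $P$ is cyclic, hence finitely generated, by Lemma~\ref{lemma:pi-cyclic}; this is what makes Corollary~\ref{cor:indec-fitting} applicable to endomorphisms of $P$.

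The key idea is to produce a single endomorphism of $P$ that distinguishes the two quotients, and then feed it into the nilpotent-or-invertible dichotomy. First I would assemble the quotient maps into $q = (\pi_1, \pi_2) \from P \to S_1 \oplus S_2$. Because $M_1$ and $M_2$ are distinct maximal submodules, neither contains the other, and a short argument using the simplicity of each $S_i$ shows that $q$ is surjective. Now I would invoke projectivity of $P$: applying the defining lifting property to the epimorphism $q$ and to the composite $\iota_1 \pi_1 \from P \to S_1 \hookrightarrow S_1 \oplus S_2$ produces an endomorphism $h \from P \to P$ with $q h = \iota_1 \pi_1$. Reading off the two coordinates of this equation yields exactly the relations $\pi_1 h = \pi_1$ and $\pi_2 h = 0$.

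Finally, since $P$ is finitely generated and indecomposable, Corollary~\ref{cor:indec-fitting} forces $h$ to be either nilpotent or invertible. If $h$ is invertible, then $\pi_2 h = 0$ gives $\pi_2 = 0$, contradicting that $\pi_2$ is an epimorphism onto the nonzero module $S_2$. If $h$ is nilpotent, say $h^n = 0$, then iterating $\pi_1 h = \pi_1$ gives $\pi_1 = \pi_1 h^n = 0$, the same kind of contradiction. Either way the assumption $M_1 \neq M_2$ collapses, and uniqueness follows. The only genuine obstacle here is the conceptual move of constructing $h$: once one thinks to lift $\iota_1 \pi_1$ across $q$, the relations $\pi_1 h = \pi_1$ and $\pi_2 h = 0$ fall out automatically and Fitting's corollary does the rest, while the surjectivity of $q$ and the two final contradictions are routine.
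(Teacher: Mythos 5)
Your proof is correct, but the endomorphism you feed into Fitting's corollary is built differently from the one in the paper. The paper takes the inclusion $\iota \from M \to P$ of one maximal submodule and asks whether the composite $M \toby{\iota} P \epic P/M'$ is surjective; if not, $M \sub M'$ and the two submodules coincide, and if so, projectivity lifts $\pi'$ to a map $\psi \from P \to M$, giving the endomorphism $\iota\psi$ with $\pi'(\iota\psi) = \pi'$, whose invertibility would force $M = P$. You instead argue by contradiction from the start, form the combined map $q = (\pi_1, \pi_2) \from P \to S_1 \oplus S_2$, prove it surjective (your glossed ``short argument'' is fine: $M_2 \not\sub M_1$ gives $\pi_1(M_2) \neq 0$, hence $\pi_1(M_2) = S_1$ by simplicity, so $(s_1,0)$ and symmetrically $(0,s_2)$ lie in the image), and lift $\iota_1\pi_1$ across $q$ to get $h$ with $\pi_1 h = \pi_1$ and $\pi_2 h = 0$. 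The two relations then kill both branches of the nilpotent-or-invertible dichotomy directly, whereas the paper's invertible branch detours through the observation that $\iota$ would be an epimorphism. Your version costs a small extra lemma (surjectivity of $q$, essentially a Chinese-remainder argument for distinct maximal submodules) but buys a more symmetric endomorphism whose two defining equations each dispatch one case; the paper's version avoids the direct sum entirely and handles the degenerate case $M \sub M'$ without assuming the submodules distinct. Both rest on the same three pillars: Proposition~\ref{propn:proj-max} for existence, Lemma~\ref{lemma:pi-cyclic} to make $P$ finitely generated, and Corollary~\ref{cor:indec-fitting} for the dichotomy.
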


\begin{proof}
Let $P$ be a projective indecomposable module.  By
Proposition~\ref{propn:proj-max}, $P$ has at least one maximal submodule.
Now let $M$ and $M'$ be maximal submodules, and consider the inclusions and
projections
\[
\xymatrix@R-5ex{
M \ar[rd]^\iota         &                       &P/M            \\
                        &
P \ar@{->>}[ru]^-\pi \ar@{->>}[rd]_-{\pi'}      &       \\
M' \ar[ru]_{\iota'}     &                       &P/M'.
}
\]
Since $P/M'$ is simple, $\im(\pi'\iota)$ is either $0$ or $P/M'$.  If $0$
then $M \sub \ker\pi' = M'$; but $M$ and $M'$ are maximal, so $M = M'$.  It
therefore suffices to prove that $\pi'\iota$ is not an epimorphism.
Suppose that it is.  Since $P$ is projective, there exists a homomorphism
$\psi$ such that
\[
\xymatrix{
                &P \ar@{.>}[ld]_\psi \ar@{->>}[rd]^{\pi'}       &       \\
M \ar[r]_\iota  &P \ar@{->>}[r]_-{\pi'}                         &P/M'
}
\]
commutes.  By Lemma~\ref{lemma:pi-cyclic}, $P$ is finitely generated, so by
Corollary~\ref{cor:indec-fitting}, the endomorphism $\iota\psi$ of $P$ is
nilpotent or invertible.  If nilpotent then $(\iota\psi)^n = 0$ for some $n
\geq 0$; but $\pi' = \pi'(\iota\psi)$, so $\pi' = \pi'(\iota\psi)^n = 0$,
contradicting the fact that $\pi'$ is an epimorphism to a nonzero module.
If invertible then $\iota$ is an epimorphism, so $M = P$, also a
contradiction.
\end{proof}

\begin{propn}
\label{propn:top}
For each projective indecomposable module $P$, there is a simple module,
unique up to isomorphism, that is a quotient of $P$.
\end{propn}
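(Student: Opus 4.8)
The plan is to exploit the correspondence between simple quotients of $P$ and maximal submodules of $P$, and then to invoke Lemma~\ref{lemma:pi-maximal}, which does essentially all of the work. The key elementary observation is that a submodule $M \sub P$ is maximal if and only if the quotient $P/M$ is simple; equivalently, an epimorphism from $P$ onto a simple module is precisely the canonical projection onto a quotient by a maximal submodule.

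For existence, I would apply Proposition~\ref{propn:proj-max} (or Lemma~\ref{lemma:pi-maximal}) to obtain a maximal submodule $M$ of $P$. Then $P/M$ is simple, and the canonical projection $P \epic P/M$ exhibits a simple module as a quotient of $P$.

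For uniqueness, suppose $\pi\from P \epic S$ and $\pi'\from P \epic S'$ are epimorphisms onto simple modules $S$ and $S'$. Since $P/\ker\pi \iso S$ is simple, $\ker\pi$ is a maximal submodule of $P$, and likewise $\ker\pi'$ is maximal. But Lemma~\ref{lemma:pi-maximal} asserts that $P$ has exactly one maximal submodule, so $\ker\pi = \ker\pi'$, and therefore $S \iso P/\ker\pi = P/\ker\pi' \iso S'$.

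I do not expect a genuine obstacle here: once Lemma~\ref{lemma:pi-maximal} is available, the proposition follows immediately, and the only thing to verify is the routine translation between maximal submodules and simple quotients. The substantive step—controlling the maximal submodules of a projective indecomposable module, via Fitting's lemma (Corollary~\ref{cor:indec-fitting}) together with projectivity and finite generation (Lemma~\ref{lemma:pi-cyclic})—has already been carried out in the proof of that lemma, so what remains is purely bookkeeping.
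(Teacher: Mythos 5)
Your proposal is correct and follows exactly the paper's route: the paper's proof is simply ``Immediate from Lemma~\ref{lemma:pi-maximal},'' and you have spelled out the routine translation between maximal submodules and simple quotients that makes it immediate. No issues.
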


\begin{proof}
Immediate from Lemma~\ref{lemma:pi-maximal}.
\end{proof}

Given a projective indecomposable module $P$, the unique simple quotient of
$P$ is called the \demph{top} or \demph{head} of $P$.

\section{The bijection}

Assembling the results of the last two sections, we obtain our main
theorem.

\begin{thm}
\label{thm:main}
There is a bijection between the set of isomorphism classes of projective
indecomposable modules and the set of isomorphism classes of simple
modules, given by matching a projective indecomposable module $P$ with a
simple module $S$ if and only if there exists an epimorphism $P \epic S$.
\end{thm}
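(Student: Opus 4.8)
The plan is to observe that Propositions~\ref{propn:top} and~\ref{propn:proj-cover} already supply the two halves of the bijection, and that these halves are automatically mutually inverse because they are the single relation ``$S$ is a quotient of $P$'' read in the two possible directions. So the work here is purely one of assembly.

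First I would check that the matching relation descends to isomorphism classes. If $P \iso P'$ and $S \iso S'$ and there is an epimorphism $P \epic S$, then precomposing with an isomorphism $P' \to P$ and postcomposing with one $S \to S'$ produces an epimorphism $P' \epic S'$. Thus whether $P \epic S$ exists depends only on the classes $[P]$ and $[S]$, and the relation described in the statement is a well-defined relation between the two sets of isomorphism classes.

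Next I would extract two functions from the propositions. By Proposition~\ref{propn:top}, each projective indecomposable $P$ has a simple quotient, unique up to isomorphism; sending $[P]$ to that class defines a function $F$ from isomorphism classes of projective indecomposables to isomorphism classes of simples. Symmetrically, by Proposition~\ref{propn:proj-cover}, each simple $S$ has a projective cover unique up to isomorphism, and sending $[S]$ to its class defines a function $G$ going the other way. The point common to both constructions is that $[P]$ is matched to $[S]$ in the sense of the theorem exactly when $F([P]) = [S]$, and equally exactly when $G([S]) = [P]$: each proposition asserts that, once one side is fixed, the partner on the other side exists and is unique.

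Finally I would confirm that $F$ and $G$ are mutually inverse. Given a projective indecomposable $P$ and a simple $S$ representing $F([P])$, the module $P$ is a projective indecomposable of which $S$ is a quotient, so the uniqueness clause of Proposition~\ref{propn:proj-cover} forces $G(F([P])) = [P]$. Dually, given a simple $S$ and its projective cover $P$ representing $G([S])$, the module $S$ is a simple quotient of $P$, so the uniqueness clause of Proposition~\ref{propn:top} forces $F(G([S])) = [S]$. Hence $F$ is a bijection with inverse $G$, and its graph is precisely the matching relation of the statement. I expect no genuine obstacle at this stage: all the substance resides in the two uniqueness assertions already proved, and this final step merely records that a symmetric relation which is single-valued in each direction is a bijection.
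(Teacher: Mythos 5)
Your proposal is correct and takes exactly the same route as the paper, which simply declares the theorem immediate from Propositions~\ref{propn:proj-cover} and~\ref{propn:top}; you have merely spelled out the bookkeeping (well-definedness on isomorphism classes and mutual inversion via the two uniqueness clauses) that the paper leaves implicit.
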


\begin{proof}
Immediate from Propositions~\ref{propn:proj-cover} and~\ref{propn:top}.
\end{proof}

\begin{example}
\label{eg:ut}
Let $A$ be the algebra of $2 \times 2$ upper triangular matrices
$\utm{a}{b}{c}$ over $K$.  The $A$-module $A$ has submodules
\[
P_1
=
\biggl\{ \utmd{a}{0}{0} \such a \in K \biggr\},
\qquad
P_2
=
\biggl\{ \utmd{0}{b}{c} \such b, c \in K \biggr\}
\]
satisfying $P_1 \oplus P_2 = A$.  By
Lemma~\ref{lemma:proj-basic}\bref{part:proj-summand-free}, $P_1$ and $P_2$ are
projective.  

Since $P_1$ is $1$-dimensional, it is simple, and in particular
indecomposable.  The existence of the identity homomorphism $P_1 \epic P_1$
implies that the simple module corresponding to the projective
indecomposable module $P_1$ is $P_1$ itself.

By an elementary calculation, $P_2$ has just one nontrivial submodule,
namely
\[
M 
=
\biggl\{ \utmd{0}{b}{0} \such b \in K \biggr\}.
\]
It follows that $P_2$ is indecomposable.  It also follows that $M$ is the
unique maximal submodule of $P_2$.  The simple module corresponding to the
projective indecomposable module $P_2$ is, therefore, $P_2/M$.  Explicitly,
$P_2/M$ is the vector space $K$ made into an $A$-module by the action
$\utm{x}{y}{z} \cdot c = zc$.

This example shows that a simple module need not be projective
indecomposable, or vice versa, as mentioned in Section~\ref{sec:basics}.
For if $P_2/M$ were projective then by
Lemma~\ref{lemma:proj-basic}\bref{part:proj-epi}, it would be a
$1$-dimensional direct summand of the indecomposable $2$-dimensional module
$P_2$.  Conversely, the projective indecomposable module $P_2$ is not
simple, having a nontrivial submodule $M$.

Lemma~\ref{lemma:pi-on-list} will imply that $P_1$ and $P_2$ are
the \emph{only} projective indecomposable $A$-modules, and, therefore, that
$P_1$ and $P_2/M$ are the only simple $A$-modules.
\end{example}

\section{The space of homomorphisms}

When a projective indecomposable module $P$ corresponds to a simple module
$S$ (that is, when there exists an epimorphism $P \epic S$), we can try to
describe the space $\Hom_A(P, S)$ of all homomorphisms $P \to S$.  This is
made easier by:

\begin{lemma}
\label{lemma:into-simple}
Every homomorphism into a simple module is either zero or an epimorphism.
\end{lemma}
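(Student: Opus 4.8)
The plan is to examine the image of the homomorphism and read off the conclusion directly from the definition of simplicity. Given a homomorphism $f \from M \to S$ with $S$ simple, I would first observe that $\im f$ is a submodule of $S$. By definition, a simple module is nonzero and has no nontrivial submodules, so the only submodules of $S$ are $0$ and $S$ itself. Hence $\im f$ is either $0$ or $S$.

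These two cases correspond exactly to the two alternatives in the statement. If $\im f = 0$ then $f$ is the zero homomorphism; if $\im f = S$ then $f$ is surjective, that is, an epimorphism. So every such $f$ is either zero or an epimorphism, as claimed.

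There is essentially no obstacle here. The entire content is the routine fact that the image of a module homomorphism is a submodule of its codomain, combined with the definition of a simple module recorded in Section~\ref{sec:basics}. It is worth noting only that the domain $M$ is allowed to be arbitrary: we make no finiteness assumption on $M$, and none is needed, since the argument takes place entirely inside the codomain $S$.
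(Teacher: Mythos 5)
Your proof is correct and is essentially identical to the paper's: both observe that the image of the homomorphism is a submodule of the simple codomain $S$, hence equals $0$ or $S$, giving the two alternatives. Nothing further is needed.
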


\begin{proof}
The image of such a homomorphism is a submodule of the codomain $S$, and
is therefore $0$ or $S$.  
\end{proof}

In particular, this implies the following result, which can be compared to
Corollary~\ref{cor:indec-fitting} for indecomposable modules.

\begin{lemma}
\label{lemma:endo-simple}
Every endomorphism of a simple module is either zero or invertible.
\end{lemma}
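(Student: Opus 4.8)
The plan is to use Lemma~\ref{lemma:into-simple} to split into two cases immediately, and then dispatch the nontrivial case by applying simplicity a second time, now to the kernel. So let $\theta$ be an endomorphism of a simple module $S$. By Lemma~\ref{lemma:into-simple}, $\theta$ is either zero or an epimorphism. In the first case there is nothing to prove, so the entire content of the lemma lies in showing that an endomorphism of $S$ which happens to be an epimorphism must in fact be invertible.

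Assume, then, that $\theta$ is an epimorphism. To conclude that $\theta$ is invertible it suffices to show that it is injective, and here I would examine $\ker\theta$, which is a submodule of $S$. Since $S$ is simple, $\ker\theta$ is either $0$ or $S$. It cannot be $S$: if $\ker\theta = S$ then $\theta = 0$, contradicting the fact that $\theta$ is an epimorphism onto the nonzero module $S$. Hence $\ker\theta = 0$, so $\theta$ is injective as well as surjective, and therefore bijective.

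The last step is to pass from ``bijective homomorphism'' to ``invertible'', which is the one routine point to check: the set-theoretic inverse of a bijective module homomorphism is again a module homomorphism, so $\theta$ has a two-sided inverse in $\End(S)$. I expect no real obstacle here; the argument is short, and the only thing to be careful about is not conflating ``epimorphism'' with ``isomorphism'' before the kernel has been inspected. An alternative route, perhaps worth a remark, runs through Corollary~\ref{cor:indec-fitting}: a simple module is cyclic by Lemma~\ref{lemma:simple-cyclic}, hence finitely generated, and is plainly indecomposable, so every endomorphism is nilpotent or invertible; one then excludes the nonzero nilpotent case by the same kernel argument. The direct proof via Lemma~\ref{lemma:into-simple} is the cleaner of the two.
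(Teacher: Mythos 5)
Your proof is correct, and it takes a slightly different route from the paper's. Both proofs begin by invoking Lemma~\ref{lemma:into-simple} to reduce to the case where $\theta$ is an epimorphism; the difference is in how surjectivity is upgraded to invertibility. The paper appeals to linear algebra: a simple module is cyclic (Lemma~\ref{lemma:simple-cyclic}), hence finitely generated, hence finite-dimensional over $K$, and a surjective linear endomorphism of a finite-dimensional vector space is automatically injective. You instead apply simplicity a second time, to $\ker\theta$: it is a submodule of $S$, so it is $0$ or $S$, and the latter is excluded because $\theta$ is a nonzero map. Your argument is the classical Schur's lemma argument, and it is more general --- it makes no use of the finite-dimensionality of $A$ or of $S$ and works for simple modules over an arbitrary ring --- whereas the paper's one-line proof leans on the standing hypotheses of the note. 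The one point you flag as needing care, that the set-theoretic inverse of a bijective homomorphism is again a homomorphism, is indeed routine and holds. Your closing remark about the alternative via Corollary~\ref{cor:indec-fitting} is also sound, though as you say the direct argument is cleaner.
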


\begin{proof}
Follows from Lemma~\ref{lemma:into-simple}, since a surjective endomorphism
of a finite-dimensional vector space is invertible.
\end{proof}

When $P$ and $S$ correspond as in Theorem~\ref{thm:main}, we can describe
$\Hom_A(P, S)$ in terms of $S$ alone: 

\begin{propn}
\label{propn:hom}
Let $P$ be a projective indecomposable module and $S$ a simple module.
Then $\Hom_A(P, S)$ is isomorphic as a vector space to either $0$ or 
$\End_A(S)$. 
\end{propn}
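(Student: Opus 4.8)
The plan is to split into two cases according to whether any nonzero homomorphism $P \to S$ exists. If $\Hom_A(P, S) = 0$, there is nothing to prove. So I would focus on the case where there is a nonzero homomorphism $\pi \from P \to S$; by Lemma~\ref{lemma:into-simple}, such a $\pi$ is automatically an epimorphism $P \epic S$. The goal then becomes producing a vector-space isomorphism $\Hom_A(P, S) \iso \End_A(S)$.

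To do this I would fix one such epimorphism $\pi \from P \epic S$ and define a linear map $\Phi \from \End_A(S) \to \Hom_A(P, S)$ by $\Phi(f) = f \circ \pi$. It is clearly $K$-linear. Injectivity is immediate: if $f \circ \pi = 0$ then, since $\pi$ is surjective, $f = 0$. The substance of the proof is surjectivity, which I expect to be the main obstacle, since a priori there is no reason that an arbitrary homomorphism $P \to S$ should factor through the chosen $\pi$.

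For surjectivity, take any $g \in \Hom_A(P, S)$. If $g = 0$ then $g = \Phi(0)$, so assume $g \neq 0$. By Lemma~\ref{lemma:into-simple}, $g$ is an epimorphism, so $\ker g$ is a maximal submodule of $P$; likewise $\ker \pi$ is maximal. This is where the key earlier result enters: by Lemma~\ref{lemma:pi-maximal}, a projective indecomposable module has exactly one maximal submodule, so $\ker g = \ker \pi$. Writing $q \from P \epic P/\ker\pi$ for the canonical projection, both $\pi$ and $g$ descend along $q$ to homomorphisms $\bar\pi, \bar g \from P/\ker\pi \to S$ with $\pi = \bar\pi \circ q$ and $g = \bar g \circ q$; and $\bar\pi$ is an isomorphism because $\pi$ is surjective with kernel exactly $\ker\pi$. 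Setting $f = \bar g \circ \bar\pi^{-1} \in \End_A(S)$ then gives $f \circ \pi = \bar g \circ \bar\pi^{-1} \circ \bar\pi \circ q = \bar g \circ q = g$, so $g = \Phi(f)$. Hence $\Phi$ is a linear isomorphism and $\Hom_A(P, S) \iso \End_A(S)$, completing the case analysis. (The dichotomy between the two cases can equivalently be read off Theorem~\ref{thm:main}: $\Hom_A(P,S)$ is nonzero precisely when $S$ is the top of $P$.)
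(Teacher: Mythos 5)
Your proof is correct and follows essentially the same route as the paper: fix a nonzero (hence surjective) $\pi$, show that precomposition $-\of\pi\from\End_A(S)\to\Hom_A(P,S)$ is injective because $\pi$ is epi, and get surjectivity from Lemma~\ref{lemma:pi-maximal} by comparing $\ker\phi$ with the unique maximal submodule $\ker\pi$. The only cosmetic difference is that the paper argues via the containment $\ker\phi\supseteq\ker\pi$ (covering the $\phi=0$ case uniformly), while you argue via the equality $\ker g=\ker\pi$ and an explicit factorization through $P/\ker\pi$.
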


In the latter case, the isomorphism is \emph{not} canonical.

\begin{proof}
If $\Hom_A(P, S) \neq 0$ then we can choose a nonzero homomorphism $\pi
\from P \to S$.  By Lemma~\ref{lemma:into-simple}, $\pi$ is an epimorphism,
so by Lemma~\ref{lemma:pi-maximal}, $\ker\pi$ is the unique maximal
submodule of $P$.  Composition with $\pi$ defines a linear map
\[
- \of \pi \from \End_A(S) \to \Hom_A(P, S),
\]
which we will prove is an isomorphism.  It is injective, as $\pi$ is an
epimorphism.  To show that is surjective, let $\phi \in \Hom_A(P, S)$.  By
Lemma~\ref{lemma:into-simple}, $\phi$ is either $0$ or an epimorphism, so
$\ker\phi$ is either $P$ or a maximal submodule of $P$.  In either case,
$\ker\phi \supseteq \ker\pi$.  Hence $\phi$ factors through $\pi$, as
required.
\end{proof}

Although $\Hom_A(P, S)$ does not carry the structure of a $K$-algebra in any
immediately obvious way, $\End_A(S)$ does.  We now analyse that structure.

\begin{lemma}
\label{lemma:end-field}
Let $S$ be a simple module.  Then:
\begin{enumerate}
\item 
\label{part:end-skew}
the $K$-algebra $\End_A(S)$ is a skew field;

\item
\label{part:end-ac}
if $K$ is algebraically closed then the $K$-algebra $\End_A(S)$ is
canonically isomorphic to $K$. 
\end{enumerate}
\end{lemma}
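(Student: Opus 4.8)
The plan is to reduce both parts to the dichotomy of Lemma~\ref{lemma:endo-simple}, that every endomorphism of $S$ is either zero or invertible. First I would record that $\End_A(S)$ genuinely is a $K$-algebra: each scalar $\lambda \in K$ acts on $S$ by $\lambda\,\id_S$, an $A$-module endomorphism that commutes with everything in $\End_A(S)$, so there is a canonical ring homomorphism $K \to \End_A(S)$ landing in the centre. Since $S$ is nonzero, $\id_S \neq 0$, so this map is injective and $\End_A(S)$ is a nonzero ring. For part~\bref{part:end-skew} it then remains only to observe that every nonzero element is invertible, which is exactly Lemma~\ref{lemma:endo-simple}; thus $\End_A(S)$ is a skew field.

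For part~\bref{part:end-ac}, I would first note that $S$ is finite-dimensional over $K$: by Lemma~\ref{lemma:simple-cyclic} it is cyclic, hence a quotient of the finite-dimensional algebra $A$. Given any $\theta \in \End_A(S)$, view it as a $K$-linear endomorphism of this finite-dimensional space. As $K$ is algebraically closed, $\theta$ has an eigenvalue $\lambda \in K$, so $\theta - \lambda\,\id_S$ has nonzero kernel and is not invertible; by Lemma~\ref{lemma:endo-simple} it must vanish, giving $\theta = \lambda\,\id_S$. Hence the canonical map $K \to \End_A(S)$ is surjective, and together with the injectivity already noted this is the desired canonical isomorphism.

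The main --- indeed only --- substantive step is the appeal to algebraic closure in part~\bref{part:end-ac}: it is the existence of an eigenvalue in $K$ that forces every endomorphism to be scalar. The single point needing care is the finite-dimensionality of $S$, without which $\theta$ need not have an eigenvalue; this is supplied by cyclicity. Everything else is a direct invocation of Lemma~\ref{lemma:endo-simple}.
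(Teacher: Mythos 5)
Your proposal is correct and follows essentially the same route as the paper: part~(i) is a direct consequence of Lemma~\ref{lemma:endo-simple}, and part~(ii) uses the eigenvalue argument for the canonical map $\lambda \mapsto \lambda\cdot\id_S$. Your explicit justification that $S$ is finite-dimensional (via cyclicity from Lemma~\ref{lemma:simple-cyclic}) is a detail the paper leaves implicit, but it is a welcome one.
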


\begin{proof}
Part~\bref{part:end-skew} is immediate from Lemma~\ref{lemma:endo-simple}.
For~\bref{part:end-ac}, we prove that the $K$-algebra homomorphism
\[
\begin{array}{ccc}
K       &\to            &\End_A(S)      \\
\lambda &\mapsto        &\lambda\cdot \id_S
\end{array}
\]
is an isomorphism.  It is injective, as $S$ is nonzero.  To prove
surjectivity, let $\theta \in \End_A(S)$.  Then $\theta$ is a linear
endomorphism of a nonzero finite-dimensional vector space over an
algebraically closed field, and so has an eigenvalue $\lambda$.  But
$\theta - \lambda \cdot \id_S$ is then a non-invertible $A$-endomorphism of
$S$, so by Lemma~\ref{lemma:endo-simple}, it must be zero.
\end{proof}

\begin{propn}
\label{propn:hom-ac}
Let $P$ be a projective indecomposable module and $S$ a simple module.
Suppose that $K$ is algebraically closed.  Then $\Hom_A(P, S)$ is
isomorphic as a vector space to either $0$ or $K$.
\end{propn}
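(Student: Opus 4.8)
The plan is to assemble the proposition directly from the two results that immediately precede it, since between them they have already done all the work. First I would invoke Proposition~\ref{propn:hom}, which asserts that $\Hom_A(P, S)$ is isomorphic as a vector space to either $0$ or $\End_A(S)$. This at once disposes of the first alternative: if $\Hom_A(P, S) \iso 0$ then we are already in one of the two allowed cases and there is nothing more to check. So I would assume instead that we are in the second case, where $\Hom_A(P, S) \iso \End_A(S)$ as vector spaces.

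Next I would bring in the standing hypothesis that $K$ is algebraically closed, which the reduction above has not yet used. This is precisely the setting of Lemma~\ref{lemma:end-field}\bref{part:end-ac}, which supplies a canonical isomorphism of $K$-algebras $\End_A(S) \iso K$. Forgetting the algebra structure, this is in particular an isomorphism of vector spaces; composing it with the isomorphism furnished by Proposition~\ref{propn:hom} yields $\Hom_A(P, S) \iso K$, which is the remaining case of the conclusion.

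I do not expect any genuine obstacle, as all the content has been front-loaded into Proposition~\ref{propn:hom} and Lemma~\ref{lemma:end-field}; what is left is a case split on whether $\Hom_A(P, S)$ vanishes, followed by a one-line composition of two isomorphisms. The only point worth flagging is that the isomorphism of Proposition~\ref{propn:hom} is merely one of vector spaces and is not canonical---as the remark following that proposition emphasizes---so the conclusion here cannot be strengthened to a canonical isomorphism either. The vector-space statement is therefore the correct level at which to phrase the result.
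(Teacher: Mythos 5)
Your proposal is correct and is exactly the paper's argument: the paper's proof reads simply ``Follows from Proposition~\ref{propn:hom} and Lemma~\ref{lemma:end-field}\bref{part:end-ac}'', and you have just spelled out that same two-step composition in detail. Your closing remark about the non-canonicity of the isomorphism also matches the paper's own comment following the proposition.
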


\begin{proof}
Follows from Proposition~\ref{propn:hom} and
Lemma~\ref{lemma:end-field}\bref{part:end-ac}.
\end{proof}

In the latter case, the isomorphism $\Hom_A(P, S) \iso K$ is not canonical.

\section{Finitely many isomorphism classes}
\label{sec:ks}

We have shown that the set of isomorphism classes of projective
indecomposable modules is in bijection with the set of isomorphism classes
of simple modules.  Here we show that both sets are finite.  We give two
alternative proofs, each using a standard theorem whose proof we omit.

The first uses the Jordan-H\"older theorem (Theorem~3.11 of~\cite{CuRe} or
Theorem~1.1.4 of~\cite{Bens}) to show that there are only finitely many
simple modules.  A \demph{composition series} of a module $M$ is a chain
\begin{equation}        
\label{eq:comp-series}
0 = M_0 \subset M_1 \subset \cdots \subset M_{r - 1} \subset M_r = M
\end{equation}
of submodules in which each quotient $M_j/M_{j - 1}$ is simple.

\begin{thm}[Jordan--H\"older]
Every finitely generated module $M$ has a composition
series~\eqref{eq:comp-series}, and the modules $M_1/M_0, \ldots, M_r/M_{r -
  1}$ are independent of the composition series chosen, up to reordering
and isomorphism.
\end{thm}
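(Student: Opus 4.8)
The plan is to prove the two assertions of the theorem—existence and uniqueness—separately, both by induction on the $K$-dimension of $M$, which is finite since $M$ is finitely generated over the finite-dimensional algebra $A$. For existence, if $M = 0$ the empty chain suffices; otherwise, being finite-dimensional, $M$ has a proper submodule of largest dimension, and such a submodule $N$ is maximal, so $M/N$ is simple. Since $\dim N < \dim M$, the inductive hypothesis supplies a composition series $0 = M_0 \subset \cdots \subset M_{r-1} = N$ of $N$, and appending $M_r = M$ yields one for $M$.

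Uniqueness is the substantial part. Given two composition series $0 = M_0 \subset \cdots \subset M_r = M$ and $0 = N_0 \subset \cdots \subset N_s = M$, I would first dispose of the case $M_{r-1} = N_{s-1}$: these are then two composition series of the same module $M_{r-1}$, so the inductive hypothesis matches their factors, and the common top quotient $M/M_{r-1}$ completes the comparison. The interesting case is $M_{r-1} \neq N_{s-1}$. Here both are maximal submodules, their quotients being simple, so the sum $M_{r-1} + N_{s-1}$ properly contains each and hence equals $M$. Writing $L = M_{r-1} \cap N_{s-1}$, the second isomorphism theorem gives $M_{r-1}/L \iso M/N_{s-1}$ and $N_{s-1}/L \iso M/M_{r-1}$, both simple.

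To finish, I would choose any composition series of $L$ and extend it across this diamond. Prepending it below $M_{r-1}$ (respectively $N_{s-1}$) produces a composition series of $M_{r-1}$ (respectively $N_{s-1}$) whose factors are those of $L$ together with a single further simple quotient. Applying the inductive hypothesis to $M_{r-1}$, by comparing its original series with this new one, shows that the factors of $M_0 \subset \cdots \subset M_{r-1}$ are precisely the factors of $L$ together with $M_{r-1}/L \iso M/N_{s-1}$; the symmetric argument handles $N_{s-1}$. Assembling these, the factor list of the first full series is the factors of $L$ together with $\{M/N_{s-1}, M/M_{r-1}\}$, and that of the second series is the same multiset, which simultaneously forces $r = s$.

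The main obstacle lies entirely in the uniqueness half, and specifically in the diamond case $M_{r-1} \neq N_{s-1}$: the two penultimate submodules are incomparable, so neither inductive hypothesis applies to them directly. The device that breaks the deadlock is interposing the intersection $L$ and invoking maximality together with the isomorphisms $M_{r-1}/L \iso M/N_{s-1}$ and $N_{s-1}/L \iso M/M_{r-1}$, which show that the two ``crossed'' top factors are interchanged rather than lost. Everything else is routine bookkeeping of composition factors as multisets.
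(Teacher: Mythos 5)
Your argument is correct, but note that the paper deliberately gives no proof of this theorem: it is one of the two ``standard theorems whose proof we omit,'' with the reader referred to Curtis--Reiner or Benson. So there is no in-paper proof to compare against; what you have supplied is the classical argument those references contain. Your existence step is sound (a proper submodule of largest dimension is maximal, since the paper's standing convention makes finitely generated equivalent to finite-dimensional over $K$), and your uniqueness step is the standard diamond induction: when the penultimate terms $M_{r-1} \neq N_{s-1}$ are distinct maximal submodules, their sum is $M$, the second isomorphism theorem identifies $M_{r-1}/L \iso M/N_{s-1}$ and $N_{s-1}/L \iso M/M_{r-1}$ for $L = M_{r-1} \cap N_{s-1}$, and a composition series of $L$ threaded through the diamond lets the inductive hypothesis (applied to the strictly smaller modules $M_{r-1}$ and $N_{s-1}$) match up the two multisets of factors, forcing $r = s$ as well. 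The only alternative route worth mentioning is the Schreier refinement theorem via the Zassenhaus (butterfly) lemma, which proves the stronger statement that any two subnormal series admit equivalent refinements and deduces Jordan--H\"older as a corollary; your direct induction is more elementary and entirely adequate for the finite-dimensional setting used here.
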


These quotients $M_j/M_{j - 1}$ are called the \demph{composition factors}
of $M$.  Thus, every finitely generated module has a well-defined
set-with-multiplicity of composition factors, which are simple modules.  In
particular, this is true of the $A$-module $A$; write $S_1, \ldots, S_r$
for its composition factors.  (They need not all be distinct.) 

Whenever $N$ is a submodule of a finitely generated module $M$, the
composition factors of $M$ are the composition factors of $N$ together with
the composition factors of $M/N$, adding multiplicities.  Hence:

\begin{lemma}
\label{lemma:simple-on-list}
Every simple module is isomorphic to $S_j$ for some $j \in \{1, \ldots, r\}$.
\end{lemma}

\begin{proof}
Let $S$ be a simple module.  By Lemma~\ref{lemma:simple-cyclic}, $S$ is a
quotient of the $A$-module $A$.  Hence every composition factor of $S$ is a
composition factor $S_j$ of $A$.  But $S$ is simple, so its unique
composition factor is itself.
\end{proof}

Together with Theorem~\ref{thm:main}, this gives our first proof of:

\begin{propn}
\label{propn:finite-set}
There are only finitely many isomorphism classes of projective
indecomposable modules, and only finitely many isomorphism classes of
simple modules.
\qed
\end{propn}

For the second proof of Proposition~\ref{propn:finite-set}, we use the
Krull--Schmidt theorem (Theorem~6.12 of~\cite{CuRe} or Theorem~1.4.6
of~\cite{Bens}, for instance).

\begin{thm}[Krull--Schmidt]
Every finitely generated module is isomorphic to a finite direct sum $M_1
\oplus \cdots \oplus M_n$ of indecomposable modules, and $M_1, \ldots, M_n$
are unique up to reordering and isomorphism.
\end{thm}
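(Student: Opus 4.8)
The plan is to prove the two assertions separately: existence of a decomposition into indecomposables, which is routine, and its uniqueness, where all the work lies. For existence I would induct on $\dim_K M$ (recall that over the finite-dimensional algebra $A$, finitely generated means finite-dimensional over $K$). If $M$ is zero or indecomposable there is nothing to prove; otherwise $M \iso X \oplus Y$ with $X$ and $Y$ nonzero, hence of strictly smaller dimension, so the inductive hypothesis decomposes each of $X$ and $Y$ into indecomposables, and concatenating gives a decomposition of $M$.

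The substance is uniqueness. Suppose $M = M_1 \oplus \cdots \oplus M_m = N_1 \oplus \cdots \oplus N_n$ with all summands indecomposable, and write $\iota_i, \pi_i$ and $\iota'_j, \pi'_j$ for the inclusions and projections of the two decompositions. The engine of the proof is that the endomorphism ring $\End_A(M_i)$ of each indecomposable summand is \emph{local}, in the sense that its non-invertible elements are closed under addition. This is where Corollary~\ref{cor:indec-fitting} enters: every endomorphism of $M_i$ is nilpotent or invertible, and if $f$ is non-invertible, hence nilpotent, then $\id - f$ is invertible via the (finite) geometric series $\id + f + f^2 + \cdots$; a short manipulation then shows that a sum of two non-units cannot be a unit.

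Granting locality, I would induct on $m$. Expanding $\id_{M_1} = \pi_1 \of \iota_1 = \sum_j (\pi_1 \of \iota'_j) \of (\pi'_j \of \iota_1)$ displays the identity of $M_1$ as a sum of $n$ endomorphisms, so locality forces one term---say, after reindexing, the $j = 1$ term---to be invertible. Putting $f = \pi'_1 \of \iota_1 \from M_1 \to N_1$ and $g = \pi_1 \of \iota'_1 \from N_1 \to M_1$, the composite $g \of f$ is then invertible. Consequently $e = f \of (g \of f)^{-1} \of g$ is an idempotent endomorphism of $N_1$ (since $e \of e = e$), and an idempotent splits $N_1$ as $\im e \oplus \ker e$, so indecomposability of $N_1$ forces $e = 0$ or $e = \id_{N_1}$. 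It cannot be $0$, as $g \of e \of f = g \of f \neq 0$, so $e = \id_{N_1}$; together with $(g \of f)^{-1} \of g \of f = \id_{M_1}$ this exhibits $f$ as an isomorphism $M_1 \iso N_1$.

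Finally I would perform the exchange. Because $f = \pi'_1 \of \iota_1$ is an isomorphism, checking that $\iota_1(M_1) \cap \ker\pi'_1 = 0$ and comparing dimensions shows that $M = \iota_1(M_1) \oplus (N_2 \oplus \cdots \oplus N_n)$; thus the first summand $M_1$ of $M$ admits both $M_2 \oplus \cdots \oplus M_m$ and $N_2 \oplus \cdots \oplus N_n$ as complements. Passing to $M/\iota_1(M_1)$ gives $M_2 \oplus \cdots \oplus M_m \iso N_2 \oplus \cdots \oplus N_n$, and the inductive hypothesis matches these remaining summands up to reordering and isomorphism. The main obstacle is the locality of $\End_A(M_i)$---specifically the step from ``nilpotent or invertible'' to ``the non-units form an ideal''---together with the idempotent argument that upgrades $g \of f$ invertible to $f$ itself being an isomorphism; once these are in place, the exchange and the induction are bookkeeping.
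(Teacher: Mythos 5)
Your argument is correct, but note that the paper does not prove this statement at all: the Krull--Schmidt theorem is quoted as a black box, with the proof explicitly omitted and delegated to Theorem~6.12 of Curtis--Reiner or Theorem~1.4.6 of Benson. What you have written is the standard proof via local endomorphism rings, and it holds together: existence by induction on $\dim_K M$ is routine; locality of $\End_A(M_i)$ follows from Corollary~\ref{cor:indec-fitting} exactly as you say (a nilpotent $f$ gives $\id - f$ invertible by the finite geometric series, and if $a+b=u$ with $u$ a unit and $a,b$ non-units, then $u^{-1}a$ is a non-unit, hence nilpotent, hence $u^{-1}b = \id - u^{-1}a$ is a unit, a contradiction); the idempotent $e = f\of(g\of f)^{-1}\of g$ on $N_1$ is nonzero because $g\of e\of f = g\of f$ is invertible, so indecomposability forces $e=\id_{N_1}$ and $f$ is an isomorphism; and the exchange plus dimension count sets up the induction on $m$. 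It is worth observing that your proof uses only tools the paper has already developed --- Fitting's lemma via Corollary~\ref{cor:indec-fitting}, and the fact that finitely generated means finite-dimensional --- so it could be inserted into the paper to make Section~\ref{sec:ks} self-contained, which is in the spirit of the rest of the exposition. The only points you have left implicit are minor bookkeeping: the base case $m=1$ of the uniqueness induction, and the remark that one-sided inverses in the finite-dimensional algebra $\End_A(M_1)$ are two-sided (needed when you pass from $u^{-1}a$ being a unit to $a$ being a unit, and from $f$ having a two-sided inverse $(g\of f)^{-1}\of g$).
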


In particular, the $A$-module $A$ is isomorphic to $P_1 \oplus \cdots
\oplus P_n$ for some indecomposable $A$-modules $P_i$, which are determined
uniquely up to order and isomorphism.  (They need not all be distinct.)  By
Lemma~\ref{lemma:proj-basic}\bref{part:proj-summand-free}, each $P_i$ is
projective.  Conversely:

\begin{lemma}
\label{lemma:pi-on-list}
Every projective indecomposable module is isomorphic to $P_i$ for some $i
\in \{1, \ldots, n\}$.
\end{lemma}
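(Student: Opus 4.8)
The plan is to show that any projective indecomposable module is a direct summand of $A$, and then to invoke the uniqueness clause of the Krull--Schmidt theorem. Let $Q$ be a projective indecomposable module. By Lemma~\ref{lemma:pi-cyclic}, $Q$ is cyclic, so there is an epimorphism $A \epic Q$. Since $Q$ is projective, Lemma~\ref{lemma:proj-basic}\bref{part:proj-epi} shows that $Q$ is a direct summand of $A$; write $A \iso Q \oplus C$ for some module $C$.

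Next I would decompose the complement. As a direct summand of the finite-dimensional module $A$, the module $C$ is itself finite-dimensional, hence finitely generated, so the Krull--Schmidt theorem supplies a decomposition $C \iso Q_1 \oplus \cdots \oplus Q_m$ into indecomposable modules. Combining this with the previous step, and recalling that $Q$ is indecomposable by hypothesis, we obtain $A \iso Q \oplus Q_1 \oplus \cdots \oplus Q_m$, expressing $A$ as a finite direct sum of indecomposable modules.

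Finally, I would compare this with the given decomposition $A \iso P_1 \oplus \cdots \oplus P_n$. Both express $A$ as a finite direct sum of indecomposables, so by the uniqueness clause of the Krull--Schmidt theorem the two lists of summands agree up to reordering and isomorphism. In particular, the summand $Q$ must be isomorphic to one of $P_1, \ldots, P_n$, which is exactly what the lemma asserts.

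There is no genuine obstacle here: the proof is a direct assembly of Lemma~\ref{lemma:pi-cyclic}, Lemma~\ref{lemma:proj-basic}\bref{part:proj-epi}, and the two halves of the Krull--Schmidt theorem. The one point that calls for a moment's care is the order of operations, namely that one must first apply the \emph{existence} half of Krull--Schmidt to the complement $C$, so that $A$ is presented as a sum of indecomposables including $Q$, before the \emph{uniqueness} half can be brought to bear.
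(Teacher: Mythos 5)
Your proof is correct and follows essentially the same route as the paper: use Lemma~\ref{lemma:pi-cyclic} and Lemma~\ref{lemma:proj-basic}\bref{part:proj-epi} to exhibit the given module as a direct summand of $A$, decompose the complement by the existence half of Krull--Schmidt, and conclude by the uniqueness half. No gaps.
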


\begin{proof}
Let $P$ be a projective indecomposable module.  By
Lemma~\ref{lemma:pi-cyclic}, there is an epimorphism $A \epic P$, so by
Lemma~\ref{lemma:proj-basic}\bref{part:proj-epi}, $A \iso P \oplus Q$ for
some module $Q$.  Now $Q$ is a quotient of $A$ and therefore finitely
generated, so by the Krull--Schmidt theorem, $Q = Q_1 \oplus \cdots \oplus
Q_m$ for some indecomposable modules $Q_j$.  This gives $A \iso P \oplus
Q_1 \oplus \cdots \oplus Q_m$.  Each of the summands is indecomposable, so
by the uniqueness part of Krull--Schmidt, $P$ is isomorphic to some $P_i$.
\end{proof}

Together with Theorem~\ref{thm:main}, this provides a second proof of
Proposition~\ref{propn:finite-set}. 

\paragraph*{Acknowledgements} Thanks to Andrew Hubery for
telling me the argument in Section~\ref{sec:max}, and to Iain Gordon,
Alastair King and Michael Wemyss for helpful conversations.

\end{document}